\newcommand{\Tr}{\operatorname{Tr}}
\newcommand{\I}{\mathds{1}}
\newcommand{\diag}{\operatorname{diag}}
\newcommand{\CC}{\mathbb{C}}
\newcommand{\RR}{\mathbb{R}}
\newcommand{\calA}{\mathcal{A}}
\renewcommand{\H}{\operatorname{H}}
\renewcommand{\P}{\operatorname{P}}
\newcommand\im{\operatorname{im}}
\theoremstyle{plain}
\newtheorem{theorem}{Theorem}
\newtheorem{proposition}[theorem]{Proposition}
\theoremstyle{definition}
\begin{document}

\title{On directional derivatives of trace functionals of the form $A\mapsto\Tr(Pf(A))$}

\author[1]{Mark W.\ Girard}
\date{\today}
\affil[1]{Institute for Quantum Computing at the University of Waterloo, Waterloo, ON, Canada}
\maketitle

\begin{abstract}
 Given a function $f:(0,\infty)\rightarrow\RR$ and a positive semidefinite $n\times n$ matrix $P$, one may define a trace functional on positive definite $n\times n$ matrices as $A\mapsto \Tr(Pf(A))$. For differentiable functions $f$, the function $A\mapsto \Tr(Pf(A))$ is differentiable at all positive definite matrices $A$. Under certain continuity conditions on~$f$, this function may be extended to certain non-positive-definite matrices $A$, and the \emph{directional} derivatives of $\Tr(Pf(A)$ may be computed there. This note presents conditions for these directional derivatives to exist and computes them. These conditions hold for the function $f(x)=\log(x)$ and for the functions $f_p(x)=x^p$ for all $p>-1$. The derivatives of the corresponding trace functionals are computed here.
\end{abstract}

\section{Introduction}
\label{sec:intro}

Let $\H_n$ denote the set of $n\times n$ Hermitian matrices over $\CC$, let $\P_n$ denote the subset of positive semidefinite $n\times n$ matrices, and let $\P_n^+$ denote the positive definite ones. Any function of real numbers $f:(0,\infty)\rightarrow\RR$ can be extended to positive definite matrices by means of the spectral decomposition. Given a positive matrix $A\in\P_n^+$ with spectral decomposition
\[
 A = \sum_{i=1}^n \alpha_i \,v_iv_i^*,
\]
where $\alpha_1,\dots,\alpha_n\in(0,\infty)$ are the eigenvalues of $A$ and $v_1,\dots,v_n\in\CC^n$ are the corresponding normalized eigenvectors, one defines $f(A)$ as
\[
 f(A) = \sum_{i=1}^n f(\alpha_i) \,v_iv_i^*,
\]
where $v^*$ denotes the conjugate transpose of a vector $v\in \CC^n$. For a positive semidefinite $n\times n$ matrix $P\in P_n$ and a function $f:(0,\infty)\rightarrow\RR$, one may define a function $f_P:\P_n^+\rightarrow\RR$ defined as
\begin{equation}\label{fPAdef}
 f_P(A) = \Tr(Pf(A))
\end{equation}
for all positive definite matrices $A\in\P_n^+$. Functions of this type arise frequently, for example, in the study of quantum information theory \cite{Girard2014}. In this note, we investigate continuity and differentiability properties of functionals of the form in \eqref{fPAdef}.

If the function $f:(0,\infty)\rightarrow\RR$ can be continuously extended to be defined at $0$, the function $f_P$ can be continuously extended to be defined at all non-positive-definite $n\times n$ matrices in the natural way. However, if the limit  $\lim_{t\rightarrow0^+}f(t)$ does not exist, it is still possible to define $f_P(A)$ for certain non-positive-definite $n\times n$ matrices~$A$ by restricting $f$ to the subspace spanned by the eigenvectors corresponding to nonzero eigenvalues of $A$. For example, the \emph{quantum relative entropy} of two positive $n\times n$ matrices $P,Q\in\P_n$ is defined by \cite{Watrous2018}
\[
 S(P\lVert Q) = \left\{ \begin{array}{ll}
                         \Tr(P\log P) - \Tr(P\log Q) & \text{whenever } \im(P)\subseteq\im(Q)\\
                         +\infty & \text{otherwise},
                        \end{array}
 \right.
\]
where $\im(P)$ denotes the image of $P$ and $\Tr(P\log Q)$ has a natural interpretation whenever $\im(P)\subseteq\im(Q)$. Indeed, for any continuous function $f:(0,\infty)\rightarrow\RR$ and any matrix $P\in\P_n$, it is natural to define $f_P(A)$ for non-positive-definite matrices $A\in\P_n$ (with $\im(A)\subseteq\im(P)$) as 
\begin{equation}\label{eq:fPAsingular}
f_P(A)= \sum_{i=1}^r f(\alpha_i) \Tr(Pv_iv_i^*),
\end{equation}
where we assume that $A\in\P_n$ has rank $r$ with nonzero eigenvalues $\alpha_1,\dots,\alpha_r>0$ and $\alpha_{r+1}=\cdots=\alpha_n=0$ (see, e.g., equation (2.2) in \cite{Rastegin2011}). 

\subsection{Directional derivatives of matrix trace functions}

Let $g:\calA\rightarrow\RR$ be a real-valued function on some subset $\calA\subset\H_n$. For any $A\in\calA$ and any matrix $B\in\H_n$, the (\emph{one-sided}) \emph{directional derivative} of $g$ at $A$ in the direction $B$ is defined as
\[
 dg(A;B) =\lim_{t\rightarrow0^+} \frac{g(A+tB)-g(A)}{t}.
\]
Here we are interested in computing the directional derivatives of functions of the form $f_P(A) = \Tr(Pf(A))$. If $f$ is differentiable and $A$ is positive definite, then these directional derivatives certainly exist, since the function $f$ when extended to positive definite matrices is differentiable as a function of matrices. However, if $A$ is not necessarily positive definite, these directional derivatives may still be computed. Knowing the derivatives is important for determining optimality conditions for certain types of optimization problems that arise in quantum information \cite{Girard2014}. The directional derivatives are presented in Theorem \ref{thm:difffPA1} and make use of the following notation.

Let $f:(0,\infty)\rightarrow\RR$ be a differentiable function. The \emph{first order divided differences} of $f$ defined as
 \[
  f^{[1]}(x,y) = \left\{\begin{array}{ll}
                         f'(x) & \text{if }x=y\\
                         \displaystyle\frac{f(x)-f(y)}{x-y} & \text{if } x\neq y
                        \end{array}
\right.
 \]
 for all $x,y\in(0,\infty)$. For any positive sedefinite $n\times n$ matrix $A\in\P_n$, we may define a linear mapping $\Phi_{f,A}:\H_n\rightarrow\H_n$ of $n\times n$ matrices as follows. If $A=\diag(\alpha_1,\dots,\alpha_n)$ is diagonal,
 we can write $A$ as
\[
 A=\sum_{i=1}^n \alpha_i \, e_ie_i^*,
\]
where $e_1,\dots,e_n\in\CC^n$ are the standard orthonormal basis vectors of $\CC^n$ such that the entries of any other matrix $B\in\H_n$ are given by $B_{ij}=\langle e_i,Be_j\rangle$. The \emph{matrix of divided differences} of $A$ (restricted to the nonzero eigenvalues of $A$) as the matrix $D_{f,A}\in\H_n$ whose entries are given by
\begin{equation}
 (D_{f,A})_{ij} = \left\{\begin{array}{ll}
                         f^{[1]}(\alpha_i,\alpha_j) & \text{if }\alpha_i>0\text{ and }\alpha_j>0\\
                        0 & \text{if }\alpha_i=0\text{ or }\alpha_j=0,
                        \end{array}
\right.
\end{equation}
and for all $B\in\H_n$ define $\Phi_{f,A}(B)$ as
\begin{equation}\label{eq:PhifABdiag}
 \Phi_{f,A}(B) = D_{f,A}\odot B
\end{equation}
where $X\odot Y$ denotes the entrywise product of matrices $X,Y\in\H_n$ with matrix elements $(X\odot Y)_{ij}=X_{ij}Y_{ij}$ for all $i,j\in\{1,\dots,n\}$. If $A$ is not diagonal, there exists an $n\times n$ unitary matrix $U$ such that $UAU^*$ is diagonal, and one defines
\begin{equation}\label{eq:PhifAB}
 \Phi_{f,A}(B) = U^*\bigl(D_{f,UAU^*}\odot (UBU^*)\bigr)
\end{equation}
 for all $B\in\H_n$, and this is independent of the choice of diagonalizing unitary $U$. We may now state the main theorem of this work. 

\begin{theorem}\label{thm:difffPA1}
 Let $f:(0,\infty)\rightarrow\RR$ be a differentiable function satisfying $\lim_{t\rightarrow0^+}tf(t)=0$, let $P\in\P_n$ be a positive semidefinite $n\times n$ matrix, and consider the function $f_P:\P_n^+\rightarrow\RR$ as defined above. Let $A\in\P_n$ be a positive matrix satisfying $\im(P)\subseteq\im(A)$ such that we may define $f_P(A)$ as in \eqref{eq:fPAsingular}, and let $B\in\H_n$. Suppose there exists $\varepsilon>0$ such that $A+tB\in\P_n$ holds for all $t\in[0,\varepsilon)$. The directional derivative of $f$ at $A$ in the direction $B$ exists and can be computed by
 \begin{align}
  df_P(A;B) &= \lim_{t\rightarrow0^+} \frac{f_P(A+tX)-f_P(A)}{t}\nonumber\\
  & = \Tr(P\,\Phi_{f,A}(B)),\label{eq:dfPAB3}
 \end{align}
where $\Phi_{f,A}:\H_n\rightarrow\H_n$ is the linear mapping defined above in \eqref{eq:PhifAB}.
\end{theorem}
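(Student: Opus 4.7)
The plan is to reduce to a block form where $A$ is diagonal, split $\Tr(Pf(A+tB))$ into contributions from the ``large'' (bounded below) and ``small'' (tending to zero) eigenvalues of $A+tB$, handle the large contribution by replacing $f$ with a smooth modification $\tilde f$ away from $0$, and use the positivity constraint $A+tB\succeq 0$ together with $\lim_{s\to 0^+}sf(s)=0$ to show the small contribution is $o(t)$. By unitary invariance of both sides of \eqref{eq:dfPAB3}, we may assume $A=\diag(A_1,0_{n-r})$ with $A_1=\diag(\alpha_1,\dots,\alpha_r)\succ 0$; the hypothesis $\im(P)\subseteq\im(A)$ then forces $P=\diag(P_1,0)$, and we decompose $B$ into conformable blocks $B_{11}$, $B_{12}$, $B_{22}$. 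For $t\ge 0$ small, the spectrum of $A+tB$ splits into $r$ ``large'' eigenvalues $\lambda_i(t)\ge\alpha_{\min}/2$ (with $\alpha_{\min}=\min_i\alpha_i$) having smoothly varying spectral projector, and $n-r$ ``small'' eigenvalues $\lambda_j(t)=O(t)$. Writing $\Tr(Pf(A+tB))=L(t)+S(t)$ according to this split, it suffices to compute $L'(0)$ and prove $S(t)=o(t)$.

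For the large part, I would choose a $C^\infty$ function $\tilde f:\RR\to\RR$ agreeing with $f$ on $[\alpha_{\min}/2,\infty)$ and vanishing on $(-\infty,\alpha_{\min}/4]$. For $t$ small enough, $\tilde f$ reproduces $f$ on the large eigenvalues and sends the small eigenvalues to zero, so $\Tr(P\tilde f(A+tB))=L(t)$ exactly. Since $\tilde f$ is smooth on a neighborhood of the spectrum of $A+tB$, the standard Daleckii--Krein chain rule for smooth matrix functions yields $L'(0)=\Tr(P\Phi_{\tilde f,A}(B))$, and this equals $\Tr(P\Phi_{f,A}(B))$ because $P$ is supported on the positive-eigenvalue subspace of $A$ and $\tilde f^{[1]}=f^{[1]}$ on pairs of positive arguments.

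For the small part, I would diagonalize $B_{22}$ (which is PSD because $A+tB\succeq 0$ forces $B_{22}\succeq 0$) with orthonormal eigenvectors $v_j(0)$ and eigenvalues $\mu_j\ge 0$. A Schur-complement argument shows the PSD condition forces $\ker B_{22}\subseteq\ker B_{12}$. If $\mu_j=0$, then $B_{12}v_j(0)=0$ as well, so letting $w_j(0)$ denote the vector with $v_j(0)$ in its lower block and zero upper block, one checks that $(A+tB)w_j(0)=0$ for every $t$; this eigenvector contributes zero to $S(t)$ via the convention $f(0)\cdot 0=0$. If $\mu_j>0$, continuity of eigenvalues gives $\lambda_j(t)/t\to\mu_j$, and the upper block of the eigenvalue equation, $(A_1+tB_{11}-\lambda_j(t)I)u_j(t)=-tB_{12}v_j(t)$, yields $u_j(t)=O(t)$ for the upper part of $w_j(t)$; hence $\langle w_j(t),Pw_j(t)\rangle=\langle u_j(t),P_1u_j(t)\rangle=O(t^2)$. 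The corresponding summand in $S(t)/t$ then takes the form (bounded)$\cdot tf(\lambda_j(t))$, and the identity $tf(\lambda_j(t))=(t/\lambda_j(t))\cdot\lambda_j(t)f(\lambda_j(t))$ combined with $\lambda_j(t)\sim\mu_j t$ and the hypothesis $sf(s)\to 0$ forces this to vanish as $t\to 0^+$.

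The main obstacle is the ultra-degenerate case $\mu_j=0$: absent the positivity constraint $A+tB\succeq 0$, the eigenvalue $\lambda_j(t)$ could vanish faster than $t$ while the upper component $u_j(t)$ of the eigenvector still grows linearly in $t$, and the hypothesis $sf(s)\to 0$ would be too weak to push the corresponding $S(t)/t$ contribution to zero. The structural consequence $\ker B_{22}\subseteq\ker B_{12}$ extracted from positivity is precisely what rules this out, by confining these directions to the static kernel of $A+tB$, where they contribute nothing to $f_P(A+tB)$.
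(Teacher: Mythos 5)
Your overall architecture is sound, and for the small-eigenvalue part it essentially reproduces the paper's argument in more elementary language: the paper uses Kato's analytic perturbation theory to get analytic curves $\lambda_i(t)$, $u_i(t)$, shows $h_i(t)=\langle u_i(t),Pu_i(t)\rangle$ vanishes to second order, and kills the remaining factor using $\lim_{s\to0^+}sf(s)=0$ after reducing --- exactly as you do --- to the case where the kernel-direction eigenvalues satisfy $\lambda_i'(0)>0$. Your Schur-complement observation $\ker B_{22}\subseteq\ker B_{12}$ is a more explicit version of the paper's ``restrict to the subspace perpendicular to $v_i$'' and is, if anything, cleaner; your resolvent bound $u_j(t)=O(t)$ replaces the paper's appeal to $h_i(0)=h_i'(0)=0$ for an analytic $h_i$. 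Two small points there: $\lambda_j(t)/t\to\mu_j$ is first-order degenerate perturbation theory rather than mere continuity of eigenvalues (you need the small eigenvalues, divided by $t$, to converge to the spectrum of $B_{22}$ with multiplicity), and you should say a word about why the strictly positive small eigenvalues are exactly those with $\mu_j>0$. Both are standard and easily supplied.

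The genuine gap is in the large-eigenvalue part. The theorem assumes only that $f$ is \emph{differentiable} on $(0,\infty)$, so no $C^\infty$ function $\tilde f$ can agree with $f$ on $[\alpha_{\min}/2,\infty)$ unless $f$ happens to be $C^\infty$ there; and if you instead take $\tilde f=\chi f$ for a smooth cutoff $\chi$, you only obtain a differentiable $\tilde f$, to which the standard Daleckii--Krein theorem does not apply (it requires $f\in C^1$, and Fr\'echet differentiability of the induced matrix function can genuinely fail for differentiable but non-$C^1$ scalar functions). As written, your argument therefore proves the theorem only under the stronger hypothesis that $f$ is $C^1$ in a neighborhood of the positive spectrum of $A$ --- which covers every example in the paper ($\log$ and $x^p$) but not the stated hypothesis. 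The paper avoids this by never invoking Fr\'echet differentiability: it differentiates $\sum_{i\le r}f(\lambda_i(t))\langle u_i(t),Pu_i(t)\rangle$ term by term along the analytic eigenvalue and eigenvector curves, where the ordinary one-variable chain rule needs only differentiability of $f$, and then resums via the first-order perturbation formulas for $\lambda_i'(0)$ and $\langle v_i,u_j'(0)\rangle$ to obtain $\Tr(P\,\Phi_{f,A}(B))$. If you are content to assume $f\in C^1$, your route goes through (and the identification $\Tr(P\,\Phi_{\tilde f,A}(B))=\Tr(P\,\Phi_{f,A}(B))$ via the support of $P$ is correct); to get the theorem as stated you must replace the cutoff-plus-Daleckii--Krein step by a direct differentiation along the perturbed spectral decomposition.
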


In the case when $A$ is positive \emph{definite}, we remark that the directional derivative in \eqref{eq:dfPAB3} coincides with well known results in \cite[Theorem V.3.3]{Bhatia1997} and \cite[Theorem 3.25]{Hiai2014}. For non-positive-definite $A\in\P_n$, these derivatives were provided in \cite{Girard2014}, but no proof of the existence of the directional derivatives were provided there. We note that the function $f$  in Theorem \ref{thm:difffPA1} must satisfy the condition that
\begin{equation}\label{eq:limttftzero}
 \lim_{t\rightarrow0^+} tf(t)=0
\end{equation}
in order for the derivatives to be computed in this manner. 

The remainder of this note is dedicated to the proof of Theorem \ref{thm:difffPA1} (which will be proved using matrix perturbation methods) and to provide some examples. In particular, we note that the condition in the theorem is met for the function $f(t)=\log(t)$ and the functions $f(t)=t^p$ for all real values $p>-1$, as these functions satisfy \eqref{eq:limttftzero}. The derivatives of the function $A\mapsto\Tr(P\log(A))$ at non-positive-definite matrices $A$ were studied in \cite{Friedland2011}. For the function $f(t)=t^{-1}$, the directional derivatives are no longer able to be computed in this manner, as this function does not satisfy the condition in \eqref{eq:limttftzero}, however the expression in \eqref{eq:dfPAB3} still provides a lower bound for the directional derivative. 

The remainder of the note is organized as follows. Section \ref{sec:back} introduces the notation that will be used in this note, recalls some basic notions of differentiation of matrix functions, and presents some facts from perturbation theory for Hermitian matrices. The proof of Theorem \ref{thm:difffPA1} is presented in Section \ref{sec:main}. In Section \ref{sec:alternate}, we consider the functions $f(t)=\log(t)$ and $f(t)=t^p$ for $p\in(-1,1)$ as examples, and provide alternate proofs of the directional derivatives for these functions by the method of integral representations rather than matrix perturbation methods. Finally, in Section \ref{sec:lower}, we show that this method finds a lower bound to the directional derivatives for the choice of function $f(t)=t^{-1}$.

\section{Background }\label{sec:back}

Notions of differentiability of matrix functions are recalled in Section \ref{sec:matrixdif}. Some results on spectral perturbation theory are reviewed in \ref{sec:matperturb}.

\subsection{Derivatives of matrix functions}
\label{sec:matrixdif}

We refer to \cite{Bhatia1997} for more details. Let $\calA\subseteq\H_n $ be a subset of the $n\times n$ Hermitian matrices and let $f:\calA\rightarrow\H_n $ be a function of matrices. The function $f$ is said to be (\emph{Fr\'echet}) \emph{differentiable} at a matrix $A\in\calA$ if there exists a linear mapping $\Phi:\H_n \rightarrow\H_n $ of matrices satisfying
\[
 \lim_{H\rightarrow 0} \frac{\lVert f(A+tH)-f(A) - \Phi(H)\rVert}{\lVert H\rVert} = 0,
\]
where $\lVert\cdot\rVert$ denotes the spectral norm on the space of matrices $n\times n$. If such a mapping exists, it is called the (\emph{Fr\'echet}) \emph{derivative} of $f$ at $A$ and is denoted by $\Phi=Df(A)$. In cases where the function is not differentiable at a point, it may still possess directional derivatives. 

Let $f:(0,\rightarrow\RR)$ be a differentiable functions. If $A\in\P_n^+$ is a positive definite matrix, then $f$ is differentiable (as a function of matrices) at $A$ with Fr\'echet derivative 
\begin{equation}\label{eq:DfAB}
 Df(A)(B) = \Phi_{f,A}(B),
\end{equation}
where $\Phi_{f,A}:\H_n\rightarrow\H_n$ is the linear mapping defined in \eqref{eq:PhifAB}. Moreover, for any positive semidefinite matrix $P\in\P_n$, the directional derivatives of the function $f_P:\P_n^+\rightarrow\RR$ (as defined in \eqref{fPAdef}) at any positive definite matrix $A\in\P_n^+$ are given by
\[
 df_P(A;B) = \Tr(P\,Df(A)(B))
\]
for all $B\in\H_n$.

\subsection{Spectral perturbation theory for Hermitian matrices}
\label{sec:matperturb}

Consider now families of Hermitian matrices of the form $A+tB$ for some choice of Hermitian matrices $A,B\in\H_n $ and variable $t\in\RR$. It is a remarkable fact from perturbation theory of linear opeators (see, e.g., \cite[II.6.2]{Kato1980}) that there exists a spectral decomposition of $A+tB$ that behaves \emph{analytically} in the variable~$t$. That is, there exist analytic functions $\lambda_1,\dots,\lambda_n:\RR\rightarrow\RR$ for the eigenvalues of $A+tB$ and analytic vector-valued functions $u_1,\dots,u_n:\RR\rightarrow\CC^n$ such that $A+tB$ may be expressed as
\begin{equation} \label{eq:AtBspectral}
 A+tB = \sum_{i=1}^n \lambda_i(t)\, u_i(t)u_i(t)^*,
\end{equation}
for all $t\in\RR$. As this is a spectral decomposition of $A+tB$, one has that $(A+tB)u_i(t)=\lambda_i(t)u_i(t)$ and that 
\[
 \langle u_i(t),u_j(t)\rangle = \left\{\begin{array}{ll}
                                            1 & \text{ if }i=j\\
                                            0 & \text{ if }i\neq j
                                           \end{array}
\right.
\]
holds for all $t\in\RR$. Suppose that the eigenvalues $\alpha_1,\dots,\alpha_n$ and the eigenvectors $v_1,\dots,v_n$ of $A$ are such that $\alpha_i=\lambda_i(0)$ and $v_i=u_i(0)$ for each $i\in\{1,\dots,n\}$. The first-order derivatives $\lambda_i'(0)$ and $u_i'(0)$ can be computed from $B$ and the spectral decomposition of $A$, as the following propostion shows.

\begin{proposition}\label{prop:AtBeig}
Suppose $A\in\H_n $ and $B\in\H_n $ are Hermitian matrices and let $\lambda_1(t),\dots,\lambda_n(t)$ and $u_1(t),\dots,u_n(t)$ denote the eigenvalues and coresponding eigenvectors (which are analytic as functions of $t$) of the matrix $A+tB$ comprising the spectral decomposition in \eqref{eq:AtBspectral}. The following statements hold. 

\begin{enumerate}[(i)]

\item For all $i\in\{1,\dots,n\}$, it holds that $\lambda_i'(0) = \langle v_i,B v_i\rangle$.
    
\item For all $i$ and $j$ with $i\neq j$, it holds that $\bigl(\alpha_i-\alpha_j\bigr) \langle v_i,u_j'(0)\rangle =  \langle v_i,Bv_j\rangle$.

\item For all $i$ and $j$, it holds that $\langle u_j'(0),v_j\rangle + \langle v_j,u_j'(0)\rangle=0$.
\end{enumerate}
Here, $\alpha_1,\dots,\alpha_n$ are the eigenvalues and $v_1,\dots,v_n$ are the eigenvectors of  $A$ such that $\alpha_i=\lambda_i(0)$ and $v_i=u_i(0)$ for each $i\in\{1,\dots,n\}$
\end{proposition}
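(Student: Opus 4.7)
The plan is to differentiate both the eigenvalue equation and the orthonormality relations implicit in the analytic spectral decomposition at $t=0$, and then extract each of the three identities by taking appropriate inner products with the basis $v_1,\dots,v_n$. The analyticity of the $\lambda_i(t)$ and $u_i(t)$ is taken wholesale from the Kato result cited just above, so everything reduces to a first-order perturbation calculation; the analytic selection itself is the only non-routine ingredient, and it is already granted.

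For parts (i) and (ii), I will differentiate the identity $(A+tB)u_j(t) = \lambda_j(t)u_j(t)$ with respect to $t$ and evaluate at $t=0$, producing
\[
 Bv_j + Au_j'(0) = \lambda_j'(0)v_j + \alpha_j u_j'(0).
\]
Taking the inner product with $v_i$, invoking orthonormality $\langle v_i, v_j\rangle = \delta_{ij}$, and using self-adjointness of $A$ to rewrite $\langle v_i, Au_j'(0)\rangle = \alpha_i \langle v_i, u_j'(0)\rangle$ yields a single master identity relating $\langle v_i, Bv_j\rangle$, $\lambda_j'(0)$, and $\langle v_i, u_j'(0)\rangle$. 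Specializing to $i=j$ (where the $\langle v_j, u_j'(0)\rangle$ contribution cancels between the two sides) gives (i), while specializing to $i\neq j$ (where the $\lambda_j'(0)$ term drops out thanks to orthogonality) gives (ii).

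For part (iii), I will differentiate the normalization condition $\langle u_j(t), u_j(t)\rangle = 1$, which holds identically in $t$ by construction of the analytic spectral decomposition. The product rule (together with conjugate-linearity of the inner product in the first slot) at $t=0$ gives $\langle u_j'(0), v_j\rangle + \langle v_j, u_j'(0)\rangle = 0$ directly.

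There is no real obstacle beyond the analyticity input: the remainder is a bookkeeping exercise in the product rule, orthonormality, and the Hermitian property of $A$. The only subtlety worth flagging is the sign convention in part (ii), which is fixed unambiguously by the orientation of the eigenvalue equation once one moves the $A$-term to the appropriate side.
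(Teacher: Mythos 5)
Your proposal is correct and follows essentially the same route as the paper: differentiate the eigenvalue equation $(A+tB)u_j(t)=\lambda_j(t)u_j(t)$ at $t=0$, pair with $v_i$ using the self-adjointness of $A$, and read off (i) and (ii) from the $i=j$ and $i\neq j$ cases of the resulting master identity. The only point to adjust is in (iii): you differentiate only the normalization $\langle u_j(t),u_j(t)\rangle=1$, which yields just the diagonal case, whereas the paper differentiates the full orthonormality relation $\langle u_i(t),u_j(t)\rangle=\delta_{ij}$ to obtain $\langle u_i'(0),v_j\rangle+\langle v_i,u_j'(0)\rangle=0$ for all $i,j$ --- and it is this off-diagonal version that is actually invoked in the proof of Theorem \ref{thm:difffPA1}, so you should differentiate the general inner product rather than only the norm.
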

\begin{proof}
 For each index $j$, note that the expression $(A+tB)u_j(t) - \lambda_j(t)u_j(t) =0$ is constant with respect to $t$. Differentiating this expression at $t=0$ yields
\[
 0 = \frac{d}{dt}\bigl((A+tB)u_j(t) - \lambda_j(t)u_j(t)  \bigr)\Bigl|_{t=0} = Bv_j + Au_j'(0) - \lambda_j'(0)v_j - \alpha_ju_j'(0).
\]
Taking the inner product of this expression with $v_i$, one finds that
\begin{align*}
0 & =\bigl\langle v_i, \, \bigl(Bv_j + Au_j'(0) - \lambda_j'(0)v_j - \alpha_ju_j'(0)\bigr)\bigr\rangle\\ 
  & = \langle v_i, Bv_j\rangle + \alpha_i \langle v_i,u_j'(0)\rangle  - \alpha_i \langle v_i,u_j'(0)\rangle- \lambda_i'(0)\langle v_i,v_j\rangle \\
  & = \langle v_i, Bv_j\rangle + \bigl(\alpha_i-\alpha_j\bigr) \langle v_i,u_j'(0)\rangle - \lambda_i'(0)\langle v_i,v_j\rangle .
\end{align*}
Taking $i=j$ yields property (i) while taking $i\neq j$ yields property (ii). To prove (iii), note that $\langle u_i(t),u_j(t)\rangle$ is constant for all $i$ and $j$. Taking the derivative yields
\[
 0 = \frac{d}{dt}\langle u_i(t),u_j(t)\rangle \Bigl|_{t=0}  = \langle u_i'(0),u_j(0)\rangle + \langle u_i(0),u_j'(0)\rangle =\langle u_i'(0),v_j\rangle + \langle v_i,u_j'(0)\rangle,
\]
as desired.
\end{proof}

\section{Proof of Theorem \ref{thm:difffPA1}}
\label{sec:conddfPA} \label{sec:main}

\begin{proof}[Proof \textup{(}of Theorem \ref{thm:difffPA1}\textup{)}]
Let $\lambda_1,\dots,\lambda_n:\RR\rightarrow\RR$ and $u_1,\dots,u_n:\RR\rightarrow\CC^n$ be the analytic functions denoting the eigenvalues and corresponding orthonormal eigenvectors of $A+tB$, and let $\alpha_1,\dots,\alpha_n$ be the eigenvalues and $v_1,\dots,v_n$ the corresponding orthonormal eigenvectors of $A$ comprising the spectral decomposition
 \[
  A=\sum_{i=1}^n \alpha_i\, v_iv_i^*,
 \]
 such that $\lambda_i(0)=\alpha_i$ and $u_i(0)=v_i$ for all $i\in\{1,\dots,n\}$. We may assume that $\alpha_1,\dots,\alpha_r>0$ are the nonzero eigenvalues of $A$ and that $\alpha_{r+1}=\dots=\alpha_n=0$. Define the value of $f_P(A)$ as 
 \[
  f_P(A) = \sum_{i=1}^r f(\alpha_i)\langle v_i,Pv_i\rangle,
 \]
 where one sums only over the nonzero eigenvalues of $A$. Note from Proposition \ref{prop:AtBeig} that $\langle v_i, B v_i\rangle =\lambda_i'(0)$, and moreover that $\lambda_i'(0)\geq0$ must hold by assumption for all $i\in\{r+1,\dots,n\}$ since $A+tB$ is assumed to be positive semidefinite for all $t\in[0,\varepsilon)$. Furthermore, it may assumed without loss of generality that $\langle v_i, B v_i\rangle >0$ for all $i\in\{r+1,\dots,n\}$. Indeed, if it holds that $\langle v_i, B v_i\rangle =0$ for some $i\in\{r+1,\dots,n\}$ then $\langle v_i, (A+tB) v_i\rangle =0$ holds for all $t$, and one may restrict to the problem to the subspace perpendicular to $v_i$. One therefore has that
\begin{align}
 df_P(A;B)&=\lim_{t\rightarrow 0^+} \frac{f_P(A+tB) - f_P(A)}{t}\nonumber\\
 & = \lim_{t\rightarrow 0^+} \frac{\Tr(P\, f(A+tB)) - f_P(A)}{t}\nonumber\\   
 & = \lim_{t\rightarrow 0^+}\sum_{i=1}^r \frac{
f(\lambda_i(t))\langle u_i(t),Pu_i(t)\rangle -f(\alpha_i) \langle v_i,Pv_i\rangle
 }{t} + 
\lim_{t\rightarrow 0^+}\sum_{i=r+1}^n \frac{f(\lambda_i(t))\langle u_i(t),Pu_i(t)\rangle}{t}\nonumber\\
& = \sum_{i=1}^r f'(\alpha_i)\lambda_i'(0) \langle u_i(0),Pu_i(0)\rangle  + \sum_{i=1}^r
f(\alpha_i) 
\left( \langle u_i'(0),Pu_i(0)\rangle +  \langle u_i(0),Pu_i'(0)\rangle\right)\label{eq:dfPAB1}\\
&\hspace{3.5in}
+\sum_{i=r+1}^n \lim_{t\rightarrow 0^+}\frac{f(\lambda_i(t))h_i(t)}{t}, \nonumber
\end{align}
where we define the functions $h_i(t)=\langle u_i(t),Pu_i(t)\rangle$ for each $i\in\{r+1,\dots,n\}$. Note that each $h_i$ is analytic with $h_i(0)=h_i'(0)=0$, since $Pv_i=0$ holds for all $i\in\{r+1,\dots,n\}$ by the assumption that $\im(P)\subset\im(A)$. The second sum in \eqref{eq:dfPAB1} reduces to
\begin{align*}
 &\sum_{i=1}^r f(\alpha_i)  \Bigl(\langle u_i'(0),Pu_i(0)\rangle +  \langle u_i(0),Pu_i'(0)\rangle\Bigr) \\
&\hspace{1cm}= \sum_{i,j=1}^r f(\alpha_i)  \Bigl( \langle u_i'(0),v_j\rangle\langle v_j,Pv_i\rangle +  \langle v_i,Pv_j\rangle\langle v_j,u_i'(0)\rangle\Bigr)\\
&\hspace{1cm}= \sum_{i,j=1}^r   \langle v_j,Pv_i\rangle  \left(f(\alpha_i)\langle u_i'(0),v_j\rangle + f(\alpha_j)\langle v_i,u_j'(0)\rangle\right)\\
& \hspace{1cm}=\sum_{\substack{i,j=1\\\alpha_i= \alpha_j}}^r   f(\alpha_i)\langle v_j,Pv_i\rangle  \underbrace{\left(\langle u_i'(0),v_j\rangle + \langle v_i,u_j'(0)\rangle\right)}_{=0} + \sum_{\substack{i,j=1\\\alpha_i\neq \alpha_j}}^r\langle v_j,Pv_i\rangle  \left(f(\alpha_i)\langle u_i'(0),v_j\rangle + f(\alpha_j)\langle v_i,u_j'(0)\rangle\right)
\end{align*}
where the term $\langle u_i'(0),v_j\rangle + \langle v_i,u_j'(0)\rangle$ in the last line above vanishes by statement (iii) in Proposition \ref{prop:AtBeig}. Thus the second sum in \eqref{eq:dfPAB1} further reduces to 
\begin{align*}
&  \sum_{\substack{i,j=1\\\alpha_i\neq \alpha_j}}^r\langle v_j,Pv_i\rangle  \left(f(\alpha_i)\frac{\langle v_i,Bv_j\rangle}{\alpha_j-\alpha_i} + f(\alpha_j)\frac{\langle v_i,Bv_j\rangle}{\alpha_i-\alpha_j}\right)\\
& \hspace{2cm}= \sum_{\substack{i,j=1\\\alpha_i\neq \alpha_j}}^r\langle v_j,Pv_i\rangle \langle v_i,Bv_j\rangle  \frac{f(\alpha_i)-f(\alpha_j)}{\alpha_i-\alpha_j} \\
&\hspace{2cm}=\sum_{\substack{i,j=1\\\alpha_i\neq \alpha_j}}^r\langle v_j,Pv_i\rangle \langle v_i,Bv_j\rangle  f^{[1]}(\alpha_i,\alpha_j).
\end{align*}
Noting from statement (ii) of Proposition \ref{prop:AtBeig} that $\langle v_i,Bv_j\rangle=0$ for all pairs of indices $i\neq j$ with $\alpha_i=\alpha_j$, the first two sums in \eqref{eq:dfPAB1} reduce to
\begin{align}
 \sum_{i=1}^r &f'(\alpha_i)\lambda_i'(0) \langle u_i(0),Pu_i(0)\rangle  + \sum_{i=1}^r f(\alpha_i) \left( \langle u_i'(0),Pu_i(0)\rangle +  \langle u_i(0),Pu_i'(0)\rangle\right) \nonumber\\
 &\hspace{2cm}= \sum_{i=1}^r \langle v_j,Pv_i\rangle \langle v_i,Bv_i\rangle f'(\alpha_i) +  \sum_{\substack{i,j=1\\\alpha_i\neq\alpha_j}}^n \langle v_j,Pv_i\rangle \langle v_i,Bv_j\rangle f^{[1]}(\alpha_i,\alpha_j)\nonumber\\
&\hspace{2cm}= \sum_{i,j=1}^n \langle v_j,Pv_i\rangle \langle v_i,Bv_j\rangle f^{[1]}(\alpha_i,\alpha_j) \nonumber\\
&\hspace{2cm}= \Tr(P\,\Phi_{f,A}(B))\label{eq:final1}.
\end{align}
Finally, as $\lambda_i(0)=\alpha_i=0$ and $\lambda_i'(0)=\langle v_i,Bv_i\rangle>0$ for all $i\in\{r+1,\dots,n\}$, it holds that
\begin{equation}\label{eq:limit1}
 \lim_{t\rightarrow0^+}\frac{f(\lambda_i(t))h_i(t)}{t} = \lim_{t\rightarrow0^+}tf(\lambda_i(t))\frac{h_i(t)}{t^2} = \underbrace{\lim_{t\rightarrow0^+}tf(\lambda_i(t))}_{=0}\frac{h_i''(0)}{2} =0,
\end{equation}
for all $i\in\{r+1,\dots,n\}$, where the limit vanishes from the fact that $\lambda_{i}(0)=0$ and $\lambda'_i(0)>0$, and by the assumption that $\lim_{t\rightarrow0^+}tf(t)=0$. Plugging the results of \eqref{eq:final1} and \eqref{eq:limit1} into \eqref{eq:dfPAB1} yields $df_P(A;B) = \Tr(P\,\Phi_{f,A}(B))$, as desired.
\end{proof}

\section{Alternative proofs of differentiability via integral representations} \label{sec:alternate}

In this section, alternative proofs for the computations of the directional derivatives of $f_{P}$ are provided in the case when $f(x)=\log (x)$ or $f(x)=x^p$ for some value $p\in(-1,1)$ following the method in \cite[Thm.\, 3]{Vedral1998}. This method makes use of integral representations of these functions, which may be extended to matrices in the usual way. 

\subsection{Directional derivatives of \texorpdfstring{$f_{p,P}$}{fpP}}

Let $p\in(-1,0)\cup(0,1)$ and consider the function $f_p:(0,\infty)\rightarrow\RR$ defined as $f_p(x)=x^p$ for all $x\in(0,\infty)$. The divided differences of this function are given by
\[
 f_p^{[1]}(x,y) = \left\{\begin{array}{ll}
                                p x^{p-1}& x=y\\
                                \displaystyle\frac{x^p-y^p}{x-y} & x\neq y.
                               \end{array}
\right.
\]
For a positive matrix $P\in\P_n $, consider the function $f_{p,P}$ defined on positive matrices as
\[
 f_{p,P}(A) = \left\{ \begin{array}{ll}
                            \Tr(PA^{p}) & \im(P)\subseteq\im(A)\text{ or }p\in(0,1)\\
                            +\infty & \text{else}
                           \end{array}\right.
\]
for all $A\in\P_n $. Note that $f_{p,P}$ is differentiable at all positive definite matrices $A$, as the function $f_p$ is differentiable. As indicated by Theorem \ref{thm:difffPA1}, the directional derivatives of $f_{p,P}$ at a positive semidefinite matrix $A$ can be computed as
\[
 df_{p,P}(A;B)  =\Tr(P \,\Phi_{f_p,A}(B))
\]
as long as $\im(P)\subseteq\im(A)$, where $B\in\H_n$ is any Hermitian matrix such that $A+tB$ is positive for all $t>0$ small enough. Here we show how to directly compute these directional derivatives using a method of integral representations for $f_p$.

The calculation is split into the cases $p\in(-1,0)$ and $p\in(0,1)$, which are considered in Sections \ref{sec:casealpha12} and \ref{sec:casealpha01} respectively. The following integral representations will be used.\footnote{c.f.\ \cite[Lemma2.8]{Carlen2010}} For all $x\in(0,\infty)$ one has
\begin{align}
 x^{p} &= \frac{-\sin(p\pi)}{\pi}\int_{0}^\infty \frac{s^{p}}{x+s}ds \label{eq:xp_intrep_1} &&\text{for all }p\in(-1,0)\\
 \text{and}\qquad x^{p} & = \frac{\sin(p\pi)}{\pi}\int_{0}^\infty s^{p}\left(\frac{1}{s}-\frac{1}{x+s}\right)ds &&\text{for all }p\in(0,1).\label{eq:xp_intrep_2}
\end{align}
Furthermore, for all $x,y\in(0,\infty)$ with $x\neq y$, and all $p\in(-1,1)$, one has 
\begin{align*}
 px^{p-1} &= \frac{\sin(p\pi)}{\pi}\int_{0}^\infty \frac{s^p}{(x+s)^2}ds\\
\text{and}\qquad \frac{x^p-y^p}{x-y} & = \frac{\sin(p\pi)}{\pi}\int_{0}^\infty \frac{s^p}{(x+s)(y+s)}ds.
\end{align*}
In particular, for all $p\in(-1,1)$, the divided differences of the function $f_p:(0,\infty)\rightarrow\RR$ defined by $f_{p}=x^{p}$ can be given by
\begin{equation}\label{eq:intrep_falphadivideddiffs}
 f_p^{[1]}(x,y) = \frac{\sin(p\pi)}{\pi}\int_{0}^\infty \frac{s^{p}}{(x+s)(y+s)}ds
\end{equation}
for all $x,y\in(0,\infty)$. For the function $g:(0,\infty)\rightarrow\RR$ defined as $g(x)=x^{-1}$ for all $x\in(0,\infty)$, note that the divided differences can expressed compactly as
\begin{equation}\label{eq:g1xy}
 g^{[1]}(x,y) = -\frac{1}{xy}
 \end{equation}
for all $x,y\in(0,\infty)$, since $g'(x)=-1/x^2$ and $g^{[1]}(x,y) = (1/x-1/y)/(x-y) = -1/xy$ for all $x,y\in(0,\infty)$ with $x\neq y$. For any positive definite matrix $A\in\P_n^+ $ and any other matrix $B\in\H_n$, one has that
\[
 \lim_{t\rightarrow0}\frac{(A+tB)^{-1}-A^{-1}}{t} = Dg(A)(B),
\]
where $Dg(A):\H_n\rightarrow\H_n$ is the linear Fr\'echet differential operator (as defined in \eqref{eq:DfAB}) for $g(x)=x^{-1}$. In the case when $A=\diag(\alpha_1,\dots\alpha_n)$ is diagonal and positive definite, the $ij$-entry of the matrix $Dg(A)(B)$ are computed as
\[
 \bigl(Dg(A)(B)\bigr)_{ij} = g^{[1]}(\alpha_i,\alpha_j) B_{ij} = -\frac{B_{ij}}{\alpha_i\alpha_j}
\]
for any $B\in\H_n$.

\subsubsection{The case \texorpdfstring{$p\in(-1,0)$}{p(in(-1,0)}} \label{sec:casealpha12}

First consider the case when $p\in(-1,0)$. Let $A\in\P_n$ be a positive matrix satisfying $\im(P)\subseteq\im(A)$, which we may suppose without loss of generality is diagonal with $A=\diag(\alpha_1,\dots,\alpha_n)$.  We may assume that $\alpha_1,\dots,\alpha_r>0$ are the nonzero eigenvalues and that $\alpha_{r+1}=\cdots=\alpha_n=0$. Let $B\in\H_n$ be an $n\times n$ Hermitian matrix and suppose there exists a positive value $\varepsilon>0$ such that $A+tB\in\P_n $ for all $t\in[0,\varepsilon)$. One may compute $f_{\alpha,P}(A+tB)$ for any $t\in[0,\varepsilon)$ using the integral representation in \eqref{eq:xp_intrep_1} as
\begin{align*}
 f_{\alpha,P}(A+tB) &=  \Tr(P (A+tB)^{p})\\
 & =\frac{-\sin(p\pi)}{\pi}\int_0^\infty\Tr\Bigl(P  (A+tB + s\I)^{-1}\Bigr)s^{p}\, ds,
\end{align*}
whre $\I$ denotes the $n\times n$ identity matrix. This holds even when $t=0$. The directional derivative $df_{p,P}(A;B)$ can be computed as
\begin{align*}
 df_{p,P}(A;B) &= \lim_{t\rightarrow 0^+} \frac{f_{\alpha,P}(A+tB)-f_{\alpha,P}(A)}{t}\\
  & = \lim_{t\rightarrow 0^+} \frac{\frac{-\sin(p\pi)}{\pi}\displaystyle\int_0^\infty\Tr\Bigl(P  \left((A+tB + s\I)^{-1} -(A + s\I)^{-1} \right)\Bigr)s^{p}\, ds}{t}\\
  & = \frac{-\sin(p\pi)}{\pi}\int_0^\infty\Tr\Biggl(P  \lim_{t\rightarrow 0^+} \frac{(A+tB + s\I)^{-1} -(A + s\I)^{-1}}{t}\Biggr)s^{p}\, ds\\
  & = \frac{-\sin(p\pi)}{\pi}\int_0^\infty  \Tr\left(P \,Dg(A+s\I)(B)\right)s^{p}\, ds
\end{align*}
where $g:(0,\infty)\rightarrow\RR$ is the function (defined earlier) $g(x)=x^{-1}$ for all $x\in(0,\infty)$. Note that $A+s\I$ is positive definite and diagonal for all $s\in(0,\infty)$ with eigenvalues $\alpha_i+s$. Extending $g$ to all positive definite matrices, one sees that $g$ is Fr\'echet differentiable at the positive definite matrix $A+s\I$ for all $s>0$ where the matrix entries of the derivative $Dg(A+s\I)(B)$ are given by
\[
 \bigl(Dg(A+s\I)(B)\bigr)_{i,j} =  g^{[1]}(\alpha_i+s,\alpha_j+s)\langle e_i,B e_j\rangle = -\frac{\langle e_i,B e_j\rangle}{(\alpha_i+s)(\alpha_j+s)}
\]
for all $i,j\in\{1,\dots,n\}$, and the divided differences are computed as in \eqref{eq:g1xy}. As it has been assumed that $\im(P)\subseteq\im(A)$, it holds that $P_{ij}=\Tr(P\,e_ie_j^*)=0$ whenever $i\in\{r+1,\dots,n\}$ or $j\in\{r+1,\dots,n\}$. It follows that
\[
 \Tr(P\, Dg(A+s\I)(B)) = -\sum_{i,j=1}^r \frac{\langle e_i,B e_j\rangle\langle e_j,P e_i\rangle}{(\alpha_i+s)(\alpha_j+s)}
\]
for all $s>0$, where one notes that the sum above is taken from $1$ to $r$. Making use of the integral representation for the divided differences $f_p^{[1]}(\alpha_i,\alpha_j)$ in \eqref{eq:intrep_falphadivideddiffs}, it follows that
\begin{align*}
 df_{p,P}(A;B)  &= \sum_{i,j=1}^r\langle e_i,B e_j\rangle\langle e_j,P e_i\rangle\frac{\sin(p\pi)}{\pi}\int_0^\infty \frac{s^{p}}{(\alpha_i+s)(\alpha_j+s)}\, ds\\
  & =\sum_{i,j=1}^re_a^*\langle e_i,B e_j\rangle\,\langle e_j,P e_i\rangle\, f_p^{[1]}(\alpha_i,\alpha_j)\\
  & = \Tr(P \,\Phi_{f_p,A}(B)),
\end{align*}
where $\Phi_{f_p,A}$ is the linear mapping defined earlier.

\subsubsection{The case \texorpdfstring{$p\in(0,1)$}{pin(0,1)} }\label{sec:casealpha01}

Now let $p\in(0,1)$. One may compute $f_{p,P}(A+tB)$ using integral representation in \eqref{eq:xp_intrep_2} as 
\begin{align*}
 f_{p,P}(A+tB) &=  \Tr(P (A+tB)^{p})\\
 & =\frac{\sin(p\pi)}{\pi}\int_0^\infty\Tr\Bigl(P \left( s^{-1}\I-(A+tB + s\I)^{-1}\right)\Bigr)s^{p}\, ds,
\end{align*}
for all $t\in[0,\varepsilon)$. The directional derivative $df_{p,P}(A,B)$ can be computed by
\begin{align*}
 df_{p,P}(A;B) &= \lim_{t\rightarrow 0^+} \frac{f_{p,P}(A+tB)-f_{p,P}(A)}{t}\\
  & = \lim_{t\rightarrow 0^+} \frac{\frac{\sin(p\pi)}{\pi}\displaystyle\int_0^\infty\Tr\Bigl(P  \left((A + s\I)^{-1} -(A +tB+ s\I)^{-1} \right)\Bigr)s^{p}\, ds}{t}\\
  & = \frac{\sin(p\pi)}{\pi}\int_{0}^{\infty} \Tr\Biggl(P\lim_{t\rightarrow0^+}\frac{(A+s\I)^{-1}-(A+tB+s\I)^{-1}}{t}\Biggr)s^{p}\, ds\\
  & = \frac{-\sin(p\pi)}{\pi}\int_0^\infty  \Tr\Bigl(P\,Dg(A+s\I)(B)\Bigr)s^{p}\, ds\\
  &= \sum_{i,j=1}^r\langle e_i,B e_j\rangle\,\langle e_j,P e_i\rangle\, f_p^{[1]}(\alpha_i,\alpha_j)\\
  & = \Tr(P\,\Phi_{f_p,A}(B)) ,
\end{align*}
using the same arguments as before.

\subsection{Directional derivatives of \texorpdfstring{$\Tr(P\log(A))$}{Tr(PlogA)}}

The same methods can be used to compute the derivatives of $\Tr(P\log(A))$. One may use the integral representation of logarithm function, which holds for all $x\in(0,\infty)$:
\[
 \log(x)=\int_0^\infty
\left(\frac{1}{s}-\frac{1}{x+s}\right)ds.
\]
Let $f:(0,\infty)\rightarrow\RR$ be the function defined as $f(x)=\log x$ for all $x\in(0,\infty)$. For a positive matrix $P\in\P_n$, define the function $f_P$ as
\[
 f_{P}(A)=\Tr(P\log A) = \int_0^\infty \Tr\left(P\left(\frac{1}{s}\I - (A+s\I)^{-1}\right)\right)ds
\]
for all $A\in\P_n$. Let $A\in\P_n$ be a matrix satisfying $\im(P)\subseteq\im(A)$. One may suppose without loss of generality that $A=\diag(\alpha_1,\dots,\alpha_n)$ is diagonal. Let $B\in\H_n$ and suppose there is a value $\varepsilon>0$ such that $A+tB\in\P_n$ holds for all $t\in[0,\varepsilon)$.  
Then
\begin{align*}
 df_{P}(A;B) &= \lim_{t\rightarrow0^+}\frac{f_P(A+tB)-f_{P}(A)}{t}\\
   &= \lim_{t\rightarrow0^+}\frac{\Tr(P\log(A+tB))-\Tr(P\log(A))}{t}\\
    & = \int_{0}^\infty \Tr\left(P\lim_{t\rightarrow0^+}\frac{(A+s\I)^{-1}-(A+tB+s\I)^{-1}}{t}\right)ds\\
    & = -\int_{0}^\infty \Tr\left(P\, Dg(A+s\I)(B)\right)ds\\
    &=\sum_{i,j=1}^r\langle e_i,B e_j\rangle\,\langle e_j,P e_i\rangle\, f^{[1]}(\alpha_i,\alpha_j)\\
    & =\Tr(P\,\Phi_{f,A}(B)),
\end{align*}
where the steps are analogous to those in Section \ref{sec:casealpha01}. This generalizes the method in \cite[Theorem 3]{Vedral1998}.

Note that the divided differences of the function $f(t)=\log(t)$ are given by
\[
 f^{[1]}(x,y) = \left\{ \begin{array}{ll}
                         \displaystyle\frac{1}{x} & x=y\\
                         \displaystyle \frac{\log(x)-\log(y)}{x-y} & x\neq y
                        \end{array}
 \right.
\]
for all $x,y\in(0,\infty)$.

\section{Lower bound for derivative of \texorpdfstring{$A\mapsto\Tr(PA^{-1})$}{AmapstoTr(PA(-1))}}
\label{sec:lower}

We now consider the function $f:(0,\infty)\rightarrow\RR$ defined by $f(t)=t^{-1}$ and the corresponding trace functional $f_P:\P_n^+\rightarrow\RR$ defined as
\[
 f_P(A)=\Tr(PA^{-1})
\]
for all positive definite matrices $A$. Let $A\in\P_n$ be a positive semidefinite matrix with spectral decomposition
\[
 A=\sum_{i=1}^n\alpha_i\, v_iv_i^*,
\]
where $\alpha_1,\dots,\alpha_r>0$ are the nonzero eigenvalues.
Let $B\in\H_n$ and suppose there exists a positive value $\varepsilon>0$ such that $A+tB\in\P_n$ for all $t\in[0,\varepsilon)$. Let $\lambda_1(t),\dots,\lambda_n(t)$ and $u_1(t),\dots,u_n(t)$ be the analytic eigenvalues and eigenvectors of $A+tB$ such that $\lambda_i(0)=\alpha_i$ and $u_i(0)=v_i$ for all $i\in\{1,\dots,n\}$. As in the proof of Theorem \ref{thm:difffPA1}, for each  $i\in\{r,\dots,n\}$ we define the function $h_i:\RR\rightarrow\RR$  by $h_i(t)=\langle u_i(t),Pu_i(t)\rangle$ such that $h_i(0)=h_i'(0)=0$. Moreover, note that 
\[
 h''_i(0) = \langle u_i'(0),Pu_i'(0)\rangle \geq0,
\]
since $P$ is positive semidefinite. Furthermore, we may assume (as in the proof of Theorem \ref{thm:difffPA1}) that $\lambda_i'(0)>0$ holds for all $i\in\{r+1,\dots,n\}$. Then
\begin{align}
 df_P(A;B)&=\lim_{t\rightarrow 0^+} \frac{f_P(A+tB) - f_P(A)}{t}\nonumber\\
 & = \Tr(P\,\Phi_{f,A}(B))+ \sum_{i=r+1}^n \lim_{t\rightarrow 0^+}\frac{f(\lambda_i(t))h_i(t)}{t} \nonumber\\
 & = \Tr(P\,\Phi_{f,A}(B))+ \sum_{i=r+1}^n \lim_{t\rightarrow 0^+}\frac{h_i(t)}{t\lambda_j(t)},
\end{align}
where, for $i\in\{r+1,\dots,n\}$, the limits in the final line reduce to
\[
 \lim_{t\rightarrow 0^+}\frac{h_i(t)}{t\lambda_i(t)} = \frac{h_i''(0)}{2} \lim_{t\rightarrow 0^+}\frac{t}{\lambda_i(t)} =  \frac{h_i''(0)}{2\lambda_i'(0)} \geq0
\]
since $\lambda_i'(0)=0$ and $h_i''(0)\geq0$. Thus $\Tr(P\,\Phi_{f,A}(B))$ provides the lower bound for the directional derivative,
\begin{equation}\label{eq:dfPABineq}
 df_P(A;B) \geq \Tr(P\,\Phi_{f,A}(B)),
\end{equation}
and this inequality is strict in general unless $\im(B)\subseteq\im(A)$. 

Indeed, to show that the inequality in \eqref{eq:dfPABineq} can be strict, consider the following example. Let $A,P\in\P_2$ and $B\in\H_2$ be the $2\times 2$ matrices
\[
 A=P = \begin{pmatrix}
        1 & 0\\ 0 & 0
       \end{pmatrix}
\qquad\text{and}\qquad
B = \begin{pmatrix}
        0 & 1\\ 1& 1
       \end{pmatrix}.
\]
For the function $f(x)=x^{-1}$, we may define $f_P(A)$ as $f_{P}(A)=1$, and the linear mapping $\Phi_{f,A}:\H_2\rightarrow\H_2$ is given by
\[
 \Phi_{f,A} \left(\begin{pmatrix}
        a & \ast\\ \ast& \ast
       \end{pmatrix}\right) = \begin{pmatrix}
        a & 0\\ 0& 0
       \end{pmatrix}
\]
(i.e., it simply picks out the entry in the upper-left corner and zeros out the other entries). It follows that $\Tr(P\Phi_{f,A}(B))=0$ for these matrices, but that
\[
 (A+tB)^{-1} = \begin{pmatrix}
        1 & t\\ t & t
       \end{pmatrix}^{-1} = \frac{1}{1-t}\begin{pmatrix}
        1 & -1\\ -1 & 1/t
       \end{pmatrix}
\]
such that $\Tr(P(A+tB)^{-1})=1/(1-t)$ for all $t>0$, and thus
\begin{align*}
 df_P(A;B) &= \lim_{t\rightarrow0^+} \frac{f_P(A+tB)-f_P(A)}{t} \\
 &= \lim_{t\rightarrow0^+} \frac{\Tr\left(P(A+tB)^{-1}\right) - 1}{t} = \lim_{t\rightarrow0^+}\frac{1}{1-t} =1.
\end{align*}
Hence $df_P(A;B) > \Tr(P\Phi_{f,A}(B))$ for these matrices.

\bibliographystyle{alpha}
\bibliography{C:/Users/m4girard/Documents/library}

\end{document}